\UseRawInputEncoding
\documentclass[12pt,draft,reqno]{amsart}

\usepackage[all]{xy}
\usepackage{amsthm,array,amssymb,amscd,amsfonts,latexsym}

\usepackage{enumerate}

\usepackage{paralist}
\usepackage{mathrsfs}
\usepackage{mathrsfs}
\usepackage{dsfont}
\usepackage{bbold}
\usepackage{mathbbol}
\usepackage[all]{xy}
\usepackage{enumerate}

\frenchspacing

{\catcode`\@=11
\gdef\n@te#1#2{\leavevmode\vadjust{%
 {\setbox\z@\hbox to\z@{\strut#1}%
  \setbox\z@\hbox{\raise\dp\strutbox\box\z@}\ht\z@=\z@\dp\z@=\z@%
  #2\box\z@}}}
\gdef\leftnote#1{\n@te{\hss#1\quad}{}}
\gdef\rightnote#1{\n@te{\quad\kern-\leftskip#1\hss}{\moveright\hsize}}
\gdef\?{\FN@\qumark}
\gdef\qumark{\ifx\next"\DN@"##1"{\leftnote{\rm##1}}\else
 \DN@{\leftnote{\rm??}}\fi{\rm??}\next@}}


\DeclareOption{loadcyr}{\cyr@true}

\DeclareFontFamily{OT1}{wncyr}{\hyphenchar\font45
}
\DeclareFontShape{OT1}{wncyr}{m}{n}{%
   <5> <6> <7> <8> <9> gen * wncyr
   <10> <10.95> <12> <14.4> <17.28> <20.74>  <24.88>wncyr10}{}
\DeclareFontShape{OT1}{wncyr}{m}{it}{%
   <5> <6> <7> <8> <9> gen * wncyi
   <10> <10.95> <12> <14.4> <17.28> <20.74> <24.88> wncyi10}{}
\DeclareFontShape{OT1}{wncyr}{m}{sc}{%
   <5> <6> <7> <8> <9> <10> <10.95> <12> <14.4>
   <17.28> <20.74> <24.88>wncysc10}{}
\DeclareFontShape{OT1}{wncyr}{b}{n}{%
   <5> <6> <7> <8> <9> gen * wncyb
   <10> <10.95> <12> <14.4> <17.28> <20.74> <24.88>wncyb10}{}
\input cyracc.def

\DeclareMathSizes{9}{9}{7}{5}


\theoremstyle{plain}

\newtheorem{theorem}{Theorem}[section]

\newtheorem{lemma}{Lemma}[section]

\theoremstyle{definition}

\newtheorem{nothing*}[theorem]{}
\newtheorem{subnothing*}[sub]{}

\newtheorem*{quesnonumber}{Question}

\newtheorem*{remarknonumber}{Remark}

\theoremstyle{remark}

\newcommand{\cc}{\raise .4pt \hbox{{$\scriptstyle{\bullet}$}}}

\setcounter{secnumdepth}{2}

\begin{document}

\title[Group embeddings]{Embeddings of automorphism groups\\ of free groups
into automorphism groups\\ of affine
algebraic varieties}
\author[Vladimir\;L.\;Popov]{Vladimir\;L.\;Popov}
\address{Steklov Mathematical Institute, Russian Academy of Sciences, Gub\-kina 8, Moscow 119991, Russia}
\email{popovvl@mi-ras.ru}

\dedicatory{To the memory of A.\;N.\;Parshin}

\maketitle

{\def\thefootnote{\relax}

\begin{abstract}
A new infinite series of rational affine algebraic va\-rieties is constructed whose automorphism group contains the auto\-morphism group ${\rm Aut}(F_n)$ of the free group $F_n$ of rank $n$. The automorphism groups of such varieties are nonlinear and contain the braid group $B_n$ on $n$ strands for $n\geqslant 3$, and are nonamenable for $n\geqslant 2$. As an application, it is proved that for $n\geqslant 3$, every Cremona group of rank $\geqslant 3n-1$ contains the groups ${\rm Aut}(F_n)$ and $B_n$. This bound is 1 better than the one published earlier by the author; with respect to $B_n$
the order of its growth rate is one less than that of the bound following from the paper by D. Krammer.
The basis of the construction are triplets
$(G, R, n)$, where $G$ is a connected semisimple algebraic group and $R$ is a closed subgroup of its maximal torus.
\end{abstract}

\begin{section}{\bf Introduction}

 The trend of the last decade has been the study of abstract-algebraic, topological, algebro-geometric, and dynamical properties of automor\-phism groups of algebraic varieties. This paper is related to this topic and
continues the research started in
  author's paper \cite{22}.

In \cite{22}, an infinite series of irreducible algebraic varieties is const\-ructed in whose automorphism group embeds the automorphism group of a free group $F_n$ of rank $n$. This has applications to the problems of linearity and amenability of automorphism groups of algebraic varieties and that of the embeddability of various groups into Cremona groups.
To formulate the results obtained in this paper, we recall the construc\-tion introduced in \cite{22}.

Consider a connected algebraic group $G$. Denote
\begin{equation*}\label{G}
\begin{gathered}
\mbox{ $X$ is the group variety of the algebraic group}\\[-1.3mm]
G^{n}:=G\times\cdots\times G\quad \mbox{($n$ times).}
\end{gathered}
\end{equation*}
We fix in $F_n$ a free system of generators $f_1,\ldots, f_n$.
For any $w
\in F_n$ and
\begin{equation}\label{gh}
x=(g_1, \ldots, g_n)\in X,\quad g_j\in G \;\mbox{for all $j$},
\end{equation}
  denote by $w(x)$ the element of $G$
obtained from the word $w$ in $f_1,\ldots, f_n$ by replacing $f_j$ with $g_j$ for each\;$j$.
For each $\sigma\in {\rm Aut}(F_n)$, the mapping
\begin{equation}\label{mor}
\sigma_X\colon X\to X,\quad x\mapsto (\sigma(f_1)(x),\ldots, \sigma(f_n)(x)).
\end{equation}
is an automorphism of the algebraic variety $X$ (but not, in general, of the group $G^n$). The mapping $\sigma\mapsto (\sigma^{-1})_X$ is a group homomorphism ${\rm Aut}(F_n)\to {\rm Aut}(X)$. It defines an action of the group ${\rm Aut}(F_n)$ by automorphisms of the variety $X$ commuting
 with the diagonal action
$G$ on $X$ by conjugation. Let us assume that for the restriction of this action to a
 closed subgroup $R$ of the group $G$, there is a categorical quotient
\begin{equation}\label{Luna}
\pi_{{X}/\!\!/R}^{\ }\colon {X}\to {X}/\!\!/R
\end{equation}
(for example, this property holds if $R$ is finite, see \cite[Prop. 19; p.\,50, Expl.\,2)]{22-21}, or if $G$ is affine and $R$ is reductive,
see \cite[4.4]{PV}).
Then it follows from the definition of categorical quotient (see \cite[Def.\,4.5]{PV}) that $\sigma_X$ descends to a uniquely defined automorphism $\sigma_{X/\!\!/R}$ of $X /\!\!/R$, which has the property
\begin{equation}\label{pssp}
\pi_{{X}/\!\!/R}^{\ }\circ\sigma_X=\sigma_{X/\!\!/R}^{\ }\circ\pi_{{X}/\!\!/R}^{\ }.
\end{equation}
In this case, there arises a group homomorphism
\begin{equation}\label{action3}
{\rm Aut}(F_n)\to {\rm Aut}({X/\!\!/R}), \quad \sigma\mapsto (\sigma^{-1})_{X/\!\!/R},
\end{equation}
defining the action of the group ${\rm Aut}(F_n)$ by automorphisms of the variety ${X/\!\!/R}$. For some (but not all) $G$ and $R$,
homomorphism \eqref{action3} is an embedding.
Namely, in \cite{22} it is proved that
\begin{enumerate}[\hskip 3.2mm \rm(a)]
\item in the following cases, homomorphism \eqref{action3} is an embedding:
\begin{enumerate}[\hskip 0mm $\cc$]
\item $G$ is nonsolvable and $R$ is finite;
\item $G$ is reductive, $R=G$, $n=1$ and $G$ contains a connected simple normal subgroup of one of the following types:
    \begin{equation}\label{tyty}
    \mbox{${\sf A}_{\ell}$ with $\ell\geqslant 2$,\;
${\sf D}_\ell$ with odd $\ell$,\;
${\sf E}_6$;}
\end{equation}
\end{enumerate}
\item in the following cases, homomorphism \eqref{action3} is not an embedding:
\begin{enumerate}[\hskip 0mm $\cc$]
\item $G$ is solvable, $R$ is finite and $n\geqslant 3$,
\item $G$ is reductive, $R=G$ and either $n\geqslant 2$, or $n=1$ and $G$ does not contain a connected simple normal subgroup of either of types\;{\rm\eqref{tyty}}.
\end{enumerate}
\end{enumerate}

This leads to the following general question:
\begin{quesnonumber}
Is it possible to classify the triples $(G, R, n)$, where $G$\ is a connected reductive algebraic group, $R$ is its closed subgroup, and $n$ is a positive integer, for which homomorphism {\rm \eqref{action3}} is an embedding?
\end{quesnonumber}

The main result of the present paper is Theorem \ref{mate}, in which the next step after \cite{22} is taken towards answering the question posed: we add one more class to the triples of the specified type found in \cite{22}:

\begin{theorem}\label{mate}
Let $G$ be a connected semisimple algebraic group and let $R$ be a closed subgroup of its maximal torus.
Then homomorphism {\rm \eqref{action3}} is an embedding.
\end{theorem}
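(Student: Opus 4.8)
The plan is to prove that \eqref{action3} is injective, and I first reduce to the case $R=T$, the whole maximal torus. Since $\sigma_X$ commutes with the conjugation action of $G$, it commutes with that of $T$, so $\sigma_X^*$ preserves the subring $\mathcal O(X)^T\subseteq \mathcal O(X)^R=\mathcal O(X/\!\!/R)$; hence the kernel of ${\rm Aut}(F_n)\to {\rm Aut}(X/\!\!/R)$ is contained in that of ${\rm Aut}(F_n)\to {\rm Aut}(X/\!\!/T)$, and it suffices to treat $R=T$. I work over the algebraically closed field of characteristic zero, so $T$ is linearly reductive and Luna's slice theorem is available. As a maximal torus of the semisimple group $G$ satisfies $Z_G(T)=T$, the torus $T$ fixes the closed subvariety $T^n\subseteq X=G^n$ pointwise; and since $T$ is abelian, each word map carries $T^n$ into $T$ coordinatewise, so $\sigma_X(T^n)\subseteq T^n$, where $\sigma_X$ acts by the monomial action of the image $\bar\sigma\in {\rm GL}_n(\mathbb Z)$ of $\sigma$ under ${\rm Aut}(F_n)\to{\rm Aut}(F_n^{\rm ab})$. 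Each point of $T^n$ is a closed $T$-orbit, so $\pi_{X/\!\!/T}$ is injective on $T^n$; thus $\sigma_{X/\!\!/T}=\mathrm{id}$ forces $\sigma_X|_{T^n}=\mathrm{id}$, hence $\bar\sigma=\mathrm{id}$. As ${\rm GL}_n(\mathbb Z)$ acts faithfully on $T^n$, the kernel lies in ${\rm IA}_n=\ker({\rm Aut}(F_n)\to {\rm GL}_n(\mathbb Z))$.

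Now take $\sigma\in {\rm IA}_n$ with $\sigma_{X/\!\!/T}=\mathrm{id}$ and suppose, for contradiction, that $\sigma\neq\mathrm{id}$; write $\sigma(f_j)=f_j v_j$ with $v_j\in[F_n,F_n]$ and some $v_{j}\neq 1$. Fix a point $x_0=(t_1^0,\dots,t_n^0)\in T^n$, fixed by both $T$ and $\sigma_X$. By Luna's slice theorem a neighbourhood of $\pi(x_0)$ in $X/\!\!/T$ is modelled on $N/\!\!/T$, where the slice $N=T_{x_0}X=\mathfrak g^{\,n}$ carries the linear adjoint $T$-action; and $\sigma_{X/\!\!/T}=\mathrm{id}$ forces the $T$-equivariant linear map $L=d(\sigma_X)_{x_0}$ on $N$ to fix every $T$-invariant polynomial (compare lowest-degree parts). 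Decomposing $\mathfrak g=\mathfrak t\oplus\bigoplus_\alpha \mathfrak g_\alpha$ into its (one-dimensional) root spaces, $L$ preserves each isotypic summand $\mathfrak g_\alpha^{\,n}\cong\mathbb C^n$, acting there by a matrix $L_\alpha$. Pairing the coordinates on $\mathfrak g_\alpha^{\,n}$ with those on $\mathfrak g_{-\alpha}^{\,n}$ gives $n^2$ weight-zero, hence invariant, bilinear functions, whose fixity under $L$ forces $L_\alpha$ to be scalar. A computation with Fox derivatives identifies $L_\alpha$ with the Jacobian $J(\sigma)(s)=\big(\overline{\partial \sigma(f_j)/\partial f_i}\big)_{i,j}$, the image of $\sigma$ under the Magnus representation ${\rm Aut}(F_n)\to {\rm GL}_n(\mathbb Z[F_n^{\rm ab}])$, evaluated at $s_i=\alpha(t_i^0)$: indeed $\mathrm{Ad}(t)$ acts on $\mathfrak g_\alpha$ by the scalar $\alpha(t)$, which turns the group prefixes in the differential of a word map into the substitution $f_i\mapsto \alpha(t_i^0)$.

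As $\alpha$ ranges over one root and $t_i^0$ over $T$, the tuple $(\alpha(t_1^0),\dots,\alpha(t_n^0))$ fills a Zariski-dense subset of $(\mathbb C^\times)^n$, so $J(\sigma)(s)$ is scalar for all $s$. From $\sigma(f_j)=f_j v_j$ one gets $J(\sigma)_{ij}(s)=\delta_{ij}+s_j\,\overline{\partial v_j/\partial f_i}(s)$, so scalarity forces $\overline{\partial v_j/\partial f_i}\equiv 0$ for $i\neq j$; combining with the fundamental Fox identity $\sum_i \overline{\partial v_j/\partial f_i}(s)(s_i-1)=\overline{v_j}(s)-1=0$ (here $\overline{v_j}=1$ since $v_j\in[F_n,F_n]$) yields $\overline{\partial v_j/\partial f_i}\equiv 0$ for all $i,j$. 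Since the abelianized total Fox derivative is injective on $F_n$ — it detects the class of $v_j$ in the successive lower-central quotients through the Magnus expansion — this gives $v_j=1$ for every $j$, i.e. $\sigma=\mathrm{id}$, the required contradiction.

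I expect the second step to be the main obstacle: reducing the global condition $\sigma_{X/\!\!/T}=\mathrm{id}$, via Luna's slice theorem, to the fixity of all invariants by the linearization $L$, and above all matching $L_\alpha$ with the evaluated Jacobian $J(\sigma)(s)$, require careful bookkeeping of the prefixes and adjoint factors in the differential of a word map. Once this identification is secured, the concluding rigidity is a formal consequence of Fox calculus.
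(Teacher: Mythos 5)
Your reduction to $R=T$ and your first step are sound: points of $T^n$ are $T$-fixed, hence closed orbits; the quotient morphism separates closed orbits; word maps preserve $T^n$; so a kernel element acts trivially on $T^n$ and therefore lies in ${\rm IA}_n=\ker({\rm Aut}(F_n)\to{\rm GL}_n(\mathbb Z))$ (this also disposes of $n=1$). The identification of $L_\alpha$ with the evaluated Fox--Jacobian is likewise correct. The fatal problem is the last step. The map $w\mapsto\bigl(\overline{\partial w/\partial f_i}\bigr)_i$ built from \emph{abelianized} Fox derivatives is not injective on $F_n$: its kernel is exactly the second derived subgroup $[F_n',F_n']$ --- this is the content of the Magnus embedding theorem for the free \emph{metabelian} group $F_n/[F_n',F_n']$. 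The injectivity you invoke (``detects the lower central quotients through the Magnus expansion'') holds for the Magnus expansion in non-commuting variables, i.e.\ for Fox derivatives taken in $\mathbb Z[F_n]$ before abelianization, and it is destroyed by passing to $\mathbb Z[F_n^{\rm ab}]$. So from $\overline{\partial v_j/\partial f_i}\equiv 0$ you may conclude only $v_j\in[F_n',F_n']$, i.e.\ that $\sigma$ acts trivially on the free metabelian quotient --- not that $\sigma={\rm id}$.

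Moreover, this gap cannot be repaired within your framework. All the information you extract from $\sigma$ is the collection of differentials $d(\sigma_X)_{x_0}$ at points $x_0\in T^n$, and these are identically the identity for every $\sigma\in{\rm IA}_n$ acting trivially on $F_n/[F_n',F_n']$: for instance, take $\sigma$ to be conjugation by any $w\in[F_n',F_n']$, $w\neq e$ (a nontrivial element of ${\rm IA}_n$, since $F_n$ has trivial center); then $\bar w=1$ and $\overline{\partial w/\partial f_i}=0$ for all $i$, and a direct Fox computation gives $J(\sigma)(s)=I$ identically. Hence no refinement of the linear algebra at the fixed locus (using more invariants on $\mathfrak g^{\,n}$, higher roots, etc.) can distinguish such $\sigma$ from the identity; one needs either higher-order jets at $T^n$ or information away from $T^n$. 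The paper does the latter: it works on the principal Luna stratum, where closed $T$-orbits have stabilizer exactly $\mathscr C(G)$; preservation of every closed orbit makes $\sigma_X$ a ``translation'' $x\mapsto\psi(\pi_{X/\!\!/T}(x))\cdot x$ there; the codimension bound ${\rm codim}_X(X\setminus X_{\rm pr})\geqslant n\geqslant 2$ together with Rosenlicht's theorem on invertible regular functions forces $\psi$ to be a constant $t\in T$; and the resulting group identity $\sigma(f_i)(g_1,\dots,g_n)=tg_it^{-1}$ is then refuted by elementary group-theoretic arguments. That global step on $X_{\rm pr}$ is precisely what your purely local argument is missing.
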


As an application
we obtain the following Theorem \ref{mateappl}, in which $B_n$ denotes the braid group on $n$ strands.

\begin{theorem}\label{mateappl}
We keep the assumptions of Theorem {\rm \ref{mate}}. Then $X/\!\!/R$
is an affine algebraic variety such that
\begin{enumerate}[\hskip 4.5mm\rm(a)]
\item ${\rm Aut}(X/\!\!/R)$ contains ${\rm Aut}(F_n)$,
\item ${\rm Aut}(X/\!\!/R)$ is nonlinear for $n\geqslant 3$,
\item ${\rm Aut}(X/\!\!/R)$
contains the braid group $B_n$ for $n\!\geqslant\!3$.
\item ${\rm Aut}(X/\!\!/R)$
is nonamenable for $n\!\geqslant\!2$.
\end{enumerate}
\end{theorem}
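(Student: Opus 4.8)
The plan is to reduce every assertion to Theorem~\ref{mate} together with three classical facts about ${\rm Aut}(F_n)$. First I would record the affineness: as $G$ is semisimple it is a linear, hence affine, algebraic group, so $X=G^n$ is affine; the subgroup $R$ lies in a maximal torus and is therefore diagonalizable and reductive, whence the categorical quotient $X/\!\!/R$ of \eqref{Luna} exists and is again affine, being the spectrum of the finitely generated algebra of $R$-invariants (cf.\ \cite[4.4]{PV}). Part~(a) is then immediate, since Theorem~\ref{mate} says exactly that \eqref{action3} is an injective homomorphism ${\rm Aut}(F_n)\hookrightarrow{\rm Aut}(X/\!\!/R)$.

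For part~(b) I would invoke the Formanek--Procesi theorem that ${\rm Aut}(F_n)$ admits no faithful finite-dimensional linear representation for $n\geqslant 3$; since every subgroup of a linear group is linear, a group containing a nonlinear subgroup is nonlinear, and this combined with~(a) gives the claim. For part~(c) I would use Artin's faithful representation $B_n\hookrightarrow{\rm Aut}(F_n)$, in which the standard generators act on $f_1,\dots,f_n$ by explicit Nielsen-type moves; composing it with the embedding of~(a) yields $B_n\hookrightarrow{\rm Aut}(X/\!\!/R)$ for $n\geqslant 3$.

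For part~(d) I would note that $F_n$ has trivial center for $n\geqslant 2$, so $F_n\cong{\rm Inn}(F_n)\leqslant{\rm Aut}(F_n)$; as $F_n$ then contains a nonabelian free subgroup it is nonamenable, hence so is ${\rm Aut}(F_n)$, and therefore so is its overgroup ${\rm Aut}(X/\!\!/R)$, amenability being inherited by subgroups. All the genuine weight of the argument rests in Theorem~\ref{mate}; in this corollary the only points requiring care are locating and correctly stating the three external inputs, so I do not anticipate a real obstacle here.
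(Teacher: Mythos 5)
Your proof is correct and follows essentially the same route as the paper: part (a) from Theorem~\ref{mate}, part (b) from the Formanek--Procesi nonlinearity of ${\rm Aut}(F_n)$, part (c) from the classical embedding $B_n\hookrightarrow{\rm Aut}(F_n)$, and part (d) from the triviality of $\mathscr{C}(F_n)$ giving $F_n\cong{\rm Inn}(F_n)\leqslant{\rm Aut}(F_n)$ and the nonamenability of free groups. Your explicit verification of affineness is a harmless addition (the paper establishes it in its Reduction section), and all external facts are correctly invoked.
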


We also explore the rationality problem:

\begin{theorem}\label{rationa}
The variety $X/\!\!/R$ from Theorems {\rm \ref{mate}, \ref{mateappl}} is rational.
\end{theorem}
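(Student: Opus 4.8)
The plan is to show that $X/\!\!/R$ is rational by analyzing its structure as a quotient of the group variety $X=G^n$ by the indicated action. Since $G$ is connected semisimple and $R$ is a closed subgroup of a maximal torus $T$ of $G$, the group $R$ is diagonalizable, hence in particular reductive, so the categorical quotient $X/\!\!/R$ exists as an affine variety (as already noted in Theorem \ref{mateappl}). The action of $R$ on $X$ in question is the restriction to $R$ of the diagonal conjugation action of $G$ on $G^n$. The key geometric input is that rationality of a variety is a birational property, so it suffices to produce a dense open subset of $X/\!\!/R$ that is rational, equivalently to understand the generic fibers of $\pi_{X/\!\!/R}\colon X\to X/\!\!/R$ and exhibit a rational section or a rational slice.

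First I would reduce to understanding the generic behavior of the conjugation action of the torus $T$ (and hence of $R\subseteq T$) on $X=G^n$. The standard fact here is that $G$ itself, as a variety, is rational: a connected semisimple (indeed any connected reductive) group over an algebraically closed field is a rational variety, via the big cell decomposition $U^-\times T\times U^+\hookrightarrow G$ coming from a Borel subgroup $B=TU^+$ and its opposite. Consequently $X=G^n$ is rational. The heart of the argument is then to show that passing to the quotient by $R$ preserves rationality. For this I would use the slice/section method: since $R$ is diagonalizable and acts by conjugation, on a suitable $R$-invariant dense open subset $\Omega\subseteq X$ the action is \emph{free} (generically the conjugation action of a torus on a product of copies of $G$ has trivial generic stabilizer, because the centralizer in $R$ of a generic $n$-tuple is trivial once $n\geqslant 1$ and $G$ is semisimple), and on the locus where the action is free the quotient map is locally trivial in a strong enough sense to produce a rational cross-section.

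Concretely, the main step is to construct a rational section of $\pi_{X/\!\!/R}$, i.e.\ a rational map $s\colon X/\!\!/R \dashrightarrow X$ with $\pi_{X/\!\!/R}\circ s=\mathrm{id}$, which identifies $X/\!\!/R$ birationally with the image of a transversal slice to the $R$-orbits. Because $R$ is a diagonalizable group acting with generically trivial stabilizer, one obtains a rational slice $Z\subseteq X$ meeting the generic orbit in a single point, so that $Z\dashrightarrow X/\!\!/R$ is birational; it then remains to verify that such a $Z$ is itself rational. I would choose $Z$ adapted to the root/weight decomposition relative to $T$: fixing coordinates on the first factor $G$ via its big cell $U^-\times T\times U^+$, the conjugation action of $t\in R$ scales the root-subgroup coordinates by the corresponding characters $\alpha(t)$, so one can use $\dim R$ of these coordinates to gauge away the $R$-action, leaving the remaining coordinates as free rational parameters. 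This realizes $X/\!\!/R$ birationally as an affine space bundle over a rational base, forcing rationality.

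The hard part will be making the slice construction rigorous: verifying that the generic stabilizer of the $R$-action is trivial (or reducing to that case) and that one can genuinely solve for $\dim R$ independent root-coordinate equations $\alpha_{i}(t)=c_i$ to gauge-fix the torus action over a dense open set. This amounts to checking that the relevant characters of $R$ generate a full-rank sublattice, i.e.\ that enough roots restrict to independent characters on $R$; this may fail for special small $R$ and will likely require either passing to the full torus $T$ first (using that $X/\!\!/T$ is rational and that $X/\!\!/R\to X/\!\!/T$ is a quotient by the diagonalizable group $T/R$ with rational total space) or a direct no-name-lemma style argument that a generically free linear-in-fibers action of a diagonalizable group has rational quotient when the total space is rational. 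Once the generic freeness and the independence of characters are in place, the rest is routine application of the fact that a torsor (or a vector bundle) over a rational base with structure group $R$ diagonalizable is again rational, completing the proof.
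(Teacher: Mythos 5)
Your first step---linearizing the conjugation action over the big cell, where the root-subgroup coordinates are scaled by the characters $\alpha(t)$---is exactly the paper's opening move (the paper builds a $T$-equivariant birational morphism $\tau\colon\Theta\to V$ onto a $T$-module and then quotes \cite[Sect.\,2.9]{PV} for rationality of $k(V)^R$, $R$ diagonalizable). But your proposal has a genuine gap at its central step: you conflate the categorical quotient $X/\!\!/R=\operatorname{Spec}k[X]^R$ with the rational quotient, i.e.\ with a variety whose function field is $k(X)^R$. Your claim that a slice $Z$ meeting the generic orbit once maps birationally onto $X/\!\!/R$, equivalently that $\pi_{X/\!\!/R}$ admits a rational section, is true only if the generic fiber of $\pi_{X/\!\!/R}$ is a single orbit, and for torus actions this is precisely the issue. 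Consider $\mathbb G_m$ scaling $\mathbb A^2$: every slice $\{x=1\}$ is a curve, yet $\mathbb A^2/\!\!/\mathbb G_m$ is a point, because the fraction field of $k[x,y]^{\mathbb G_m}=k$ is strictly smaller than $k(x,y)^{\mathbb G_m}=k(x/y)$. So gauge-fixing the $R$-action computes (at best) the field $k(X)^R$, and without a further argument this says nothing about $X/\!\!/R$. The missing ingredient is \emph{stability}: one must show the generic $R$-orbit in $X$ is closed, and then invoke \cite[Prop.\,3.4]{PV} to conclude that $k(X)^R$ is the fraction field of $k[X]^R=k[X/\!\!/R]$. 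The paper spends the second half of its proof on exactly this, deducing stability of the $R$-action from stability of the $T$-action (Theorem \ref{pLs}(b)) via \cite{Lu1} and \cite[Prop.\,6]{P89}; your proposal never mentions closedness of orbits at all.

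Two secondary points. First, the generic stabilizer is not trivial as you assert: it is $R\cap\mathscr{C}(G)$ (the kernel of the action), which is nontrivial e.g.\ for $R=T$ and $G$ simply connected; this is harmless but must be handled by passing to $R/(R\cap\mathscr{C}(G))$. Second, the difficulty you flag as the ``hard part'' is not where the difficulty lies, and your proposed fallback is circular: $X/\!\!/T$ is a further quotient of $X/\!\!/R$ (the morphism $X/\!\!/R\to X/\!\!/T$ goes that way), so rationality of $X/\!\!/T$ cannot be fed back to give rationality of $X/\!\!/R$. The character-independence worry also evaporates once one divides out the kernel: faithfulness of a diagonalizable group action on a module forces the weights to generate the full character lattice, and in any case rationality of $k(V)^R$ for diagonalizable $R$ is a standard fact needing no such rank condition.
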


As an application
we strengthen by $1$ the bounds obtained in \cite[Cor.\,9 and Rem.\,10]{22}:

\begin{theorem}\label{corcor}
For any integer $n\geqslant 1$,
the Cremona group of rank $\geqslant 3n-1$ contains the group ${\rm Aut}(F_n)$ and, for $n\geqslant 3$, the braid group\;$B_n$.
\end{theorem}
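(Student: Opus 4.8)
The plan is to deduce the statement from Theorems \ref{mate}, \ref{mateappl}, \ref{rationa} by exhibiting a \emph{single} admissible triple $(G,R,n)$ whose quotient $X/\!\!/R$ has dimension exactly $3n-1$, and then to transport the subgroups furnished by Theorem \ref{mateappl} into the Cremona group using rationality. The two formal ingredients I would record first are: (i) for any irreducible rational variety $Y$ of dimension $d$, restriction of birational maps gives an embedding ${\rm Aut}(Y)\hookrightarrow {\rm Bir}(Y)\cong {\rm Bir}(\mathbb{P}^{d})$ into the Cremona group of rank $d$ (injectivity holds since an automorphism that is the identity on a dense open set is the identity, and the isomorphism ${\rm Bir}(Y)\cong{\rm Bir}(\mathbb{P}^{d})$ is induced by any birational equivalence $Y\dashrightarrow\mathbb{P}^{d}$); and (ii) the monotonicity ${\rm Bir}(\mathbb{P}^{m})\hookrightarrow {\rm Bir}(\mathbb{P}^{d})$ for $m\leqslant d$, obtained by letting a birational self-map of $\mathbb{P}^{m}$ act on the first factor of the rational variety $\mathbb{P}^{m}\times\mathbb{P}^{d-m}$ and by the identity on the second. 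Granting these, and using that $X/\!\!/R$ is rational (Theorem \ref{rationa}), every group contained in ${\rm Aut}(X/\!\!/R)$ embeds into the Cremona group of rank $\dim(X/\!\!/R)$, and hence into that of any rank $\geqslant\dim(X/\!\!/R)$.

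It therefore remains only to realize $\dim(X/\!\!/R)=3n-1$. For this I would take $G={\rm SL}_2$ and $R=T$ a maximal torus of $G$; then $G$ is connected semisimple and $R$ is a closed (reductive, one-dimensional) subgroup of its maximal torus, so the hypotheses of Theorem \ref{mate} hold and the categorical quotient \eqref{Luna} exists. Here $X=G^{n}$ has dimension $3n$, and $T$ acts by simultaneous conjugation, so the stabilizer of a point $(g_1,\dots,g_n)$ equals $T\cap\bigcap_{i}C_G(g_i)$. For a generic tuple, $g_1$ is regular semisimple, its centralizer is the unique maximal torus $T'$ containing it, and $T'\neq T$; hence the stabilizer is contained in $T\cap T'$, which for a group of type ${\sf A}_1$ is the finite center $\{\pm I\}$ whenever $T'\neq T$. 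Thus the generic $T$-orbit is one-dimensional and $\dim(X/\!\!/R)=3n-\dim T=3n-1$, as desired. (The identical computation yields $\dim(X/\!\!/T)=n\dim G-{\rm rank}\,G$ for any connected semisimple $G$, an expression minimized precisely by the three-dimensional groups of type ${\sf A}_1$, which is why the bound $3n-1$ is natural; but only the single example above is needed.)

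Assembling the pieces: with this choice Theorem \ref{mateappl}(a) gives ${\rm Aut}(F_n)\subseteq {\rm Aut}(X/\!\!/R)$ for all $n\geqslant1$, and Theorem \ref{mateappl}(c) gives $B_n\subseteq {\rm Aut}(X/\!\!/R)$ for $n\geqslant3$. Since $X/\!\!/R$ is irreducible, rational, and of dimension $3n-1$, ingredient (i) embeds ${\rm Aut}(X/\!\!/R)$—and with it both ${\rm Aut}(F_n)$ and (for $n\geqslant3$) $B_n$—into the Cremona group of rank $3n-1$; ingredient (ii) then propagates these embeddings to every rank $\geqslant 3n-1$, which is the assertion of the theorem.

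I expect the only genuinely substantive step to be the generic-stabilizer computation that certifies the generic $T$-orbit has full dimension ${\rm rank}\,G$, i.e.\ that the value $\dim(X/\!\!/R)=3n-1$ is actually attained for the type-${\sf A}_1$ choice; the passage to Cremona groups and the rank stabilization are formal once Theorems \ref{mate}, \ref{mateappl}, \ref{rationa} are available, and the improvement by $1$ over the earlier bound comes entirely from allowing the nontrivial torus quotient (which lowers the dimension by ${\rm rank}\,G=1$) rather than a finite $R$.
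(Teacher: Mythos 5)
Your proposal takes essentially the same route as the paper: the same triple ($G={\rm SL}_2$, $R=T$ a maximal torus), the same assembly via Theorems \ref{mate}, \ref{mateappl}, \ref{rationa}, and the same two formal facts about Cremona groups (an automorphism group of a rational variety embeds into the Cremona group of rank equal to its dimension, and every Cremona group embeds into those of higher rank), which the paper invokes without spelling out.

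The one point where you diverge is the dimension count, and there your write-up has a gap. The paper obtains $\dim(X/\!\!/T)=3n-1$ by citing Theorem \ref{pLs}(c): the generic fiber of $\pi_{X/\!\!/T}$ is a single closed $T$-orbit isomorphic to $T/\mathscr{C}(G)$. You instead compute the stabilizer of a generic point (correctly: for a generic tuple it is $\{\pm I\}=\mathscr{C}({\rm SL}_2)$, so the generic orbit is one-dimensional) and then pass directly to $\dim(X/\!\!/R)=3n-\dim T$. For a categorical quotient that inference is invalid in general: $\dim(X/\!\!/T)$ equals $\dim X$ minus the dimension of the generic \emph{fiber} of $\pi_{X/\!\!/T}$, and a fiber can be strictly larger than the orbits it contains when generic orbits are not closed. (Example: $\mathbb G_m$ acting on $k^2$ by scaling has one-dimensional generic orbits, yet the categorical quotient is a point.) What rescues the equality here is stability of the action --- generic $T$-orbits are closed --- which is precisely the content of Theorem \ref{pLs}(b); citing it, or part (c) as the paper does, closes the gap. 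The gap happens to be harmless for the theorem as stated: since fibers of $\pi_{X/\!\!/T}$ are $T$-invariant and generically contain one-dimensional orbits, your computation does yield $\dim(X/\!\!/T)\leqslant 3n-1$, and an upper bound is all the final assembly needs, because the embeddings then land in every Cremona group of rank $\geqslant 3n-1$ anyway.
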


\begin{remarknonumber}
As is proved in \cite{14} by D. Krammer, the braid group $B_n$ embeds into ${\rm GL}_{n(n-1)/2}$. Hence $B_n$ embeds into the Cremona group of rank $n(n-1)/2$. The order of the growth rate of this bound for the minimal rank of the Cremona group containing $B_n$ is one bigger  than that of the bound from Theorem \ref{corcor}.
\end{remarknonumber}

The proofs of Theorems \ref{mate}--\ref{corcor} are given in Section \ref{proofs}.

\end{section}

\begin{section}{\bf Conventions and notation}

 In what follows, algebraic varieties are considered over an algebrai\-cal\-ly closed field $k$.
We use the results of paper \cite{Lu2} and statement \cite[Prop. 3.4]{PV} obtained under the condition ${\rm char}(k)=0$. Therefore, we also assume that this condition holds. With respect to algebraic geometry and algebraic groups we follow \cite{3}.

The identity element of a group considered in multiplicative nota\-tion is denoted by $e$ (it will be clear from the context which group is meant).

The statement that a group $A$ contains a group $B$
means the exis\-tence of a group monomorphism $
B\hookrightarrow A$, by which $B$ is identified with its image.

${\mathscr C}(A)$ is the center of a group $A$.

$A\cdot m$ and $A_m$ are respectively, the orbit and the stabilizer of a point $m$ with respect to a considered action of a group $A$ (it will be clear from the context which action is meant).

The {\it kernel of an action} $\alpha\colon A\times M\to M$ of a group $A$ on a set $M$
is the following normal subgroup of $A$:
$$\ker(\alpha):=\{a\in A\mid a\cdot m=m\;\mbox{for all $m\in M$}\}.$$

By homomorphisms of algebraic groups we mean algebraic homomor\-phisms, and by their actions on algebraic varieties we mean algebraic actions. In particular, for an algebraic group $A$, we denote by ${\rm Aut}(A)$ the group of its algebraic automorphisms.

The multiplicative group $k^\times$ of the field $k$ is considered as
the algeb\-raic group $\mathbb G_m$, and its additive group is considered as $\mathbb G_a$.

\end{section}

\begin{section}{\bf Terminology and some general results}\label{term}

Recall (see \cite{PV}) the terminology and results used below, which con\-cern an action of $\alpha$ of an algebraic group $H$ on an irreducible algebraic variety $Y$.

(a) An action $\alpha$ is said to be {\it stable} if there is a nonempty open set in $Y$, the $H$-orbits of whose points are closed in\;$Y$.

(b) A subgroup $H_*$ of $H$ is called the {\it stabilizer in general posisiton} (s.g.p.) of the action $\alpha$ if there is a nonempty open set in $Y$ such that for any its point $y$, the subgroups $H_y$ and $H_*$ are conjugate in $H$.

(c) If $H$ is reductive and $Y$ is smooth and affine, then

\hskip 6.5mm $\cc$  The s.g.p. exists.

\hskip 6.5mm $\cc$   The varieties $Y$ and $Y/\!\!/H$ are endowed with the Luna stratifica\-tions defined as follows. The fact that the points $a, b\in Y/\!\!/H$ belong to the same Luna stratum means that the normal vector bundles to the unique $H$-orbits closed in the fibers $\pi_{Y/\!\!/H}^{-1}( a)$ and $\pi_{Y/\!\!/H}^{-1}(b)$ orbits are $H$-equivariantly isomorphic. The Luna strata in $Y$ are the sets of the form $\pi_{Y/\!\!/H}^{-1}(L)$, where $L$ is a Luna stratum in $Y/\!\!/H$. The Luna stratifications have the following properties:
\begin{enumerate}[\hskip 9.2mm\rm(i)]
\item the set of all Luna strata is finite;
\item all Luna strata in the varieties $Y/\!\!/H$ and $Y$ are smooth locally closed subvarieties of these varieties;
\item for any Luna stratum $L$ in $Y/\!\!/H$ there exists
an affine variety $F$ endowed with an action of $H$
such that the restriction of the morphism $\pi_{Y/\!\!/H}^{\ }$ to the stratum $\pi_{Y/\!\!/H}^{-1 }(L)$ (called the {\it canonical  morphism} of the Luna stratum $\pi_{Y/\!\!/H}^{-1}(L)$)
is an \'etale trivial bundle
$\pi_{Y/\!\!/H}^{-1}(L)\to L$
with fiber $F$.
\end{enumerate}

In view of (i) and (ii), there are (unique) open Luna strata in $Y/\!\!/H$ and $Y$. They are called the {\it principal strata} and denoted by $(Y/\!\!/H)_{\rm pr}$ and $Y_{\rm pr}$ respectively.

\begin{lemma}\label{lmmm}
We keep the previous notation $H$, $Y$, $\alpha$. Let $y\in Y$ be a point such that
\begin{enumerate}[\hskip 6.2mm \rm(a)]
\item[$({\rm y}_1)$] the orbit $H\!\cdot\!y$ is closed in $Y$;
\item[$({\rm y}_2)$] $H_y=\ker(\alpha)$.
\end{enumerate}
Let $\beta$ be an action of the group $H$ on an algebraic variety $Z$ such that
\begin{enumerate}[\hskip 4.5mm \rm(k)]
\item $\ker(\alpha)=\ker(\beta)$,
\end{enumerate}
and let $\varphi\colon Z\to Y$ be an
$H$-equivariant morphism such that $\varphi^{-1}(H\!\cdot\!y)\neq \varnothing $.
Then for every point $z\in \varphi^{-1}(H\!\cdot\!y)$ the following properties hold:
\begin{enumerate}[\hskip 6.2mm \rm(a)]
\item[$({\rm z}_1)$] the orbit $H\!\cdot\!z$ is closed in $Z$;
\item[$({\rm z}_2)$] $H_ z=\ker(\beta)$.
\end{enumerate}
\end{lemma}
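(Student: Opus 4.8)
The plan is to prove $({\rm z}_2)$ first, and then use it, together with the elementary dimension theory of orbits, to deduce the closedness statement $({\rm z}_1)$. The key observation is a stabilizer computation valid for \emph{any} point $z'\in Z$ with $\varphi(z')\in H\cdot y$, not just for points of $\varphi^{-1}(H\cdot y)$; this uniformity is what will later force the orbit to be closed.

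First I would record this stabilizer computation. Since $\varphi$ is $H$-equivariant, $h\cdot z'=z'$ implies $h\cdot\varphi(z')=\varphi(h\cdot z')=\varphi(z')$, so $H_{z'}\subseteq H_{\varphi(z')}$. Writing $\varphi(z')=h_0\cdot y$ for some $h_0\in H$ and using that $\ker(\alpha)=H_y$ is normal in $H$ (being the kernel of an action), I get $H_{\varphi(z')}=h_0H_yh_0^{-1}=H_y=\ker(\alpha)$, and hence $H_{z'}\subseteq\ker(\alpha)=\ker(\beta)$ by hypothesis $({\rm k})$. The reverse inclusion $\ker(\beta)\subseteq H_{z'}$ is immediate, as $\ker(\beta)$ fixes every point of $Z$. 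Therefore $H_{z'}=\ker(\beta)$. Specializing to $z'=z$ gives $({\rm z}_2)$.

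The hard part will be $({\rm z}_1)$, for which I would pass to the orbit closure. Set $C:=\overline{H\cdot z}$ in $Z$. By continuity and equivariance, $\varphi(C)\subseteq\overline{\varphi(H\cdot z)}=\overline{H\cdot\varphi(z)}=\overline{H\cdot y}$, and this last set equals $H\cdot y$ by hypothesis $({\rm y}_1)$. Thus every $z'\in C$ satisfies $\varphi(z')\in H\cdot y$, so the computation above applies and yields $H_{z'}=\ker(\beta)$ for \emph{all} $z'\in C$. Consequently $\dim(H\cdot z')=\dim H-\dim\ker(\beta)$ is the same for every $z'\in C$; in particular, every orbit contained in $C$ has the same dimension as $H\cdot z$.

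To conclude, I would invoke the standard fact that the orbit $H\cdot z$ is locally closed and that its boundary $\overline{H\cdot z}\setminus H\cdot z$ is a union of $H$-orbits of dimension strictly smaller than $\dim(H\cdot z)$. Since the previous step shows that $C=\overline{H\cdot z}$ contains no orbit of dimension less than $\dim(H\cdot z)$, this boundary must be empty, whence $H\cdot z=C$ is closed in $Z$, establishing $({\rm z}_1)$. The only input beyond formal manipulation is this dimension-of-boundary property of orbits, so the argument is uniform and needs no reductivity or smoothness hypothesis on $Z$.
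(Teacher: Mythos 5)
Your proposal is correct; every step checks out. The stabilizer computation is valid: equivariance gives $H_{z'}\subseteq H_{\varphi(z')}$, and since $H_y=\ker(\alpha)$ is normal in $H$, the stabilizer $H_{h_0\cdot y}=h_0H_yh_0^{-1}$ equals $H_y$ itself, so $H_{z'}\subseteq\ker(\alpha)=\ker(\beta)\subseteq H_{z'}$. The closedness argument then works because $\overline{H\!\cdot\!z}\subseteq\varphi^{-1}(H\!\cdot\!y)$ forces all orbits in the closure to have the common dimension $\dim H-\dim\ker(\beta)$, which is incompatible with a nonempty boundary by the standard fact that boundary orbits have strictly smaller dimension.

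Your route differs from the paper's in both order and mechanism, though it rests on the same foundational facts (local closedness of orbits, the boundary-dimension property, normality of kernels, and the formula $\dim(H\!\cdot\!z')=\dim H-\dim H_{z'}$). The paper proves $({\rm z}_1)$ first, by contradiction: for a boundary point $v$ it uses that the $H$-equivariant map $H\!\cdot\!v\to H\!\cdot\!y$ between orbits is automatically \emph{surjective}, giving $\dim(H\!\cdot\!v)\geqslant\dim(H\!\cdot\!y)$, while $H_z\supseteq\ker(\beta)$ gives $\dim(H\!\cdot\!z)\leqslant\dim(H\!\cdot\!y)$; these clash with the boundary inequality. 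It then proves $({\rm z}_2)$ by again invoking surjectivity to find $h$ with $\varphi(h\cdot z)=y$, sandwiching $H_{h\cdot z}$ between $\ker(\beta)$ and $H_y$, and conjugating back using normality. You instead prove the stabilizer identity \emph{first}, uniformly for every point of $\varphi^{-1}(H\!\cdot\!y)$, by applying normality at the level of $H_{\varphi(z')}$ rather than after a surjectivity argument; closedness then drops out because orbit dimension is constant on the closure. What your version buys is economy: the surjectivity-of-orbit-maps step disappears entirely, a single computation serves both conclusions, and the contradiction in $({\rm z}_1)$ is immediate. What the paper's version buys is that the dimension comparison $\dim(H\!\cdot\!z)\leqslant\dim(H\!\cdot\!y)$ needs only the inclusion $H_z\supseteq\ker(\beta)$, so the exact stabilizer identity is cleanly quarantined in the second half; but this is a matter of taste, not of substance.
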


\begin{proof} In view of $({\rm y}_1)$, the nonempty $H$-invariant subset $\varphi^{-1}(H\!\cdot\! y)$ is closed in $Z$ and hence contains the closure $ \overline{H\!\cdot\!z}$ of the orbit ${H\!\cdot\!z}$. Assume that $({\rm z}_1)$ fails, i.e., $\overline{H\!\cdot\!z}\setminus {H\!\cdot\!z}\neq\varnothing$.
Let $v\in \overline{H\!\cdot\!z}\setminus {H\!\cdot\!z}$. Then
\begin{equation}\label{edi}
\dim(H\!\cdot\!v)<\dim(H\!\cdot\!z).
\end{equation}
The restriction of the morphism $\varphi$ to the orbit $H\!\cdot\!v$ is an $H$-equivariant and therefore a surjective morphism $H\!\cdot\!v\to H\!\cdot\! y$. So $\dim(H\!\cdot\!v)\geqslant \dim(H\!\cdot\!y)$, which together with \eqref{edi} gives
\begin{equation}\label{edii}
\dim(H\!\cdot\!y)<\dim(H\!\cdot\!z)
\end{equation}
On the other hand, since $H_z\supseteq \ker(\beta)$, from $({\rm y}_2)$ and
(k) it follows that $\dim(H\!\cdot\!y)\geqslant \dim(H\!\cdot\!z)$. This contradicts \eqref{edii} and proves $({\rm z}_1)$.

The restriction of the morphism $\varphi$ to the orbit $H\!\cdot\!z$ is an $H$-equiva\-riant, and therefore a surjective morphism of $H\!\cdot\!z\to H\!\cdot\!y$. Hence, there exists $h\in H$ for which $\varphi(h\cdot z)=y$. Therefore,
\begin{equation*}
\ker(\alpha)\overset{{\rm (k)}}{=}\ker(\beta)\subseteq H_{h\cdot z}\subseteq H_y\overset{({\rm y}_2)} {=}\ker(\alpha);
\end{equation*}
whence, $H_{h\cdot z}=\ker(\beta)$. Since $H_{h\cdot z}=hH_zh^{-1}$ and $\ker(\beta)$ is normal in $H$, this proves $({\rm z}_2)$.
\end{proof}
\eject

\begin{lemma}\label{ot}
Let a commutative group $H$ act transitively on a set $M$.
\begin{enumerate}[\hskip 4.2mm $({\rm e}_1)$]
\item For every $H$-equivariant mapping $\varphi\colon M\to M$ there is an ele\-ment $h\in H$ such that
\begin{equation}\label{eo}
\mbox{$\varphi(m)=h\cdot m$ for every point $m\in M$.}
\end{equation}
\item For every element $h\in H$, the map $\varphi\colon M\to M$ defined by formula {\rm \eqref{eo}} is $H$-equivariant.
\end{enumerate}
\end{lemma}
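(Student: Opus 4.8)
The plan is to reduce everything to the behaviour at a single base point, using transitivity of $H$ to spread that behaviour across all of $M$ and using commutativity of $H$ to make the two group elements involved slide past each other.

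For $({\rm e}_1)$, I would fix once and for all a base point $m_0\in M$. Since $\varphi(m_0)\in M$ and $H$ acts transitively, there exists $h\in H$ with $\varphi(m_0)=h\cdot m_0$; this $h$ is the candidate. To verify $\varphi(m)=h\cdot m$ for an arbitrary $m\in M$, I write $m=g\cdot m_0$ for some $g\in H$ (again by transitivity). Then $H$-equivariance of $\varphi$ gives $\varphi(m)=\varphi(g\cdot m_0)=g\cdot\varphi(m_0)=g\cdot(h\cdot m_0)=(gh)\cdot m_0$, whereas $h\cdot m=h\cdot(g\cdot m_0)=(hg)\cdot m_0$. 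These two coincide precisely because $H$ is commutative, so $gh=hg$, which yields the claim. Note that no bijectivity of $\varphi$ need be assumed; it falls out of the conclusion, since $h\cdot(-)$ is a bijection.

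For $({\rm e}_2)$, given $h\in H$ and $\varphi(m):=h\cdot m$, the check is a one-line computation: for $g\in H$ and $m\in M$, $\varphi(g\cdot m)=h\cdot(g\cdot m)=(hg)\cdot m=(gh)\cdot m=g\cdot(h\cdot m)=g\cdot\varphi(m)$, where commutativity is used in the middle step.

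The statement is elementary and there is no serious obstacle; the only point worth flagging is that commutativity of $H$ is genuinely needed in both directions. In $({\rm e}_1)$ it is what forces the element $h$ read off at the base point to work uniformly at every $m$, and in $({\rm e}_2)$ it is what makes translation by $h$ commute with the ambient $H$-action. In a noncommutative setting neither conclusion holds in general, so it is this hypothesis, rather than any subtlety of the argument, that carries the weight of the lemma.
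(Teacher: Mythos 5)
Your proof is correct and follows essentially the same route as the paper's: fix a base point $m_0$, use transitivity to obtain $h$ with $\varphi(m_0)=h\cdot m_0$, then use equivariance of $\varphi$ and commutativity of $H$ to propagate this to all of $M$. The only difference is that you write out the one-line verification of $({\rm e}_2)$, which the paper dismisses as immediate.
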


\begin{proof}
$({\rm e}_1)$ Fix a point $m_0\in M$. Since the action is tran\-si\-tive, for any point $m\in M$ there is an element $z\in H$ such that $m=z\cdot m_0$. In particular, there is $h\in H$ for which $\varphi(m_0)=h\cdot m_0$.
Since $\varphi$ is $H$-equivariant and $H$ is commutative, we then have $\varphi(m)=\varphi(z\cdot m_0)=z\cdot\varphi(m_0)=z\cdot (h\cdot m_0)=
zh\cdot m_0=hz\cdot m_0=h\cdot(z\cdot m_0)=h\cdot m$.

$({\rm e}_2)$ This follows directly from \eqref{eo} in view of the commutativity of\;$H$.
\end{proof}

\end{section}

\begin{section}{\bf Reduction}\label{reduction}

The proof of Theorem \ref{mate} is based on the following geometric de\-scrip\-tion of the kernel of homomorphism \eqref{action3}:

\begin{lemma}\label{categ}
Let $G$ be a connected affine algebraic group and let $R$ be its closed reductive subgroup.
  The following properties of an element $\sigma\in {\rm Aut}(F_n)$
are equivalent:
\begin{enumerate}[\hskip 4.2mm\rm(a)]
\item
$\sigma$ lies in the kernel of homomorphism \eqref{action3};
\item $\sigma_X^{\ }(\mathcal O)=\mathcal O$ for every closed $R$-orbit $\mathcal O$ in $X$.
\end{enumerate}
\end{lemma}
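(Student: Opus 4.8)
The plan is to translate both conditions into statements about the categorical quotient morphism $\pi := \pi_{X/\!\!/R}$ and then exploit the standard orbit correspondence from geometric invariant theory. Because $R$ is reductive and $X = G^n$ is affine (as $G$ is affine), every fiber of $\pi$ contains a unique closed $R$-orbit, and $\pi$ restricts to a bijection between the closed $R$-orbits in $X$ and the points of $X/\!\!/R$; this is the fact I would invoke throughout. Before the two implications I would record two reductions. First, since $\sigma \mapsto (\sigma^{-1})_{X/\!\!/R}$ is a homomorphism, its kernel is a subgroup and hence stable under inversion, so condition (a) is equivalent to $\sigma_{X/\!\!/R} = {\rm id}$. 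Second, I would note that $\sigma_X$ is $R$-equivariant: the ${\rm Aut}(F_n)$-action commutes with the diagonal conjugation action of $G$, and $R \subseteq G$, so $\sigma_X(r\cdot x) = r\cdot\sigma_X(x)$ for all $r \in R$; being an automorphism of the variety $X$, the map $\sigma_X$ is in particular a homeomorphism and therefore carries each closed $R$-orbit onto a closed $R$-orbit.

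For the implication (a) $\Rightarrow$ (b) I would start from $\sigma_{X/\!\!/R} = {\rm id}$, take a closed $R$-orbit $\mathcal O$, and set $a := \pi(\mathcal O)$. The equivariance above makes $\sigma_X(\mathcal O)$ a closed $R$-orbit, and relation \eqref{pssp} gives $\pi(\sigma_X(\mathcal O)) = \sigma_{X/\!\!/R}(a) = a$; thus $\sigma_X(\mathcal O)$ is a closed orbit lying inside the fiber $\pi^{-1}(a)$, which by uniqueness in the correspondence forces $\sigma_X(\mathcal O) = \mathcal O$.

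For the converse (b) $\Rightarrow$ (a) I would fix an arbitrary point $a \in X/\!\!/R$, choose the unique closed $R$-orbit $\mathcal O$ contained in $\pi^{-1}(a)$ (so that $\pi(\mathcal O) = a$), and compute, using \eqref{pssp} together with the hypothesis $\sigma_X(\mathcal O) = \mathcal O$, that $\sigma_{X/\!\!/R}(a) = \sigma_{X/\!\!/R}(\pi(\mathcal O)) = \pi(\sigma_X(\mathcal O)) = \pi(\mathcal O) = a$. Since $a$ is arbitrary, this yields $\sigma_{X/\!\!/R} = {\rm id}$, i.e. condition (a).

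The one nonformal ingredient, and the step I expect to be the crux, is the existence and uniqueness of a closed $R$-orbit in each fiber of $\pi$: this is precisely where reductivity of $R$ and affineness of $X$ are used, and I would cite it from the invariant-theoretic results of \cite{PV} rather than reprove it. Once that correspondence is in hand, both implications are purely formal consequences of the intertwining relation \eqref{pssp} and the $R$-equivariance of $\sigma_X$.
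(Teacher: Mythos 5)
Your proof is correct and follows essentially the same route as the paper's: both rest on the fact that, $X$ being affine and $R$ reductive, each fiber of $\pi_{X/\!\!/R}$ contains a unique closed $R$-orbit, and both combine this with relation \eqref{pssp} and the $R$-equivariance of $\sigma_X$ to match closed orbits with points of the quotient. The only cosmetic difference is that you separate the two implications, while the paper derives the single identity $\sigma_X(\mathcal O_b)=\mathcal O_{\sigma_{X/\!\!/R}(b)}$ and reads off the equivalence pointwise.
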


\begin{proof}
In this case, the variety $X$ is affine, which implies
(see \cite[\S2 and Append.\,1B]{18}) that the morphism $\pi$ is surjective, its fibers are $R$-inva\-riant, and for each point
$b\in X/\!\!/R$, the fiber $\pi^{-1}(b)$ contains a unique closed $R$-orbit $\mathcal O_b$.
It follows from \eqref{pssp} that the restriction of the morphism $\sigma_X$ to the fiber $\pi^{-1}(b)$ is its $R$-equivariant isomorphism with the fiber $\pi^{-1}(\sigma_ {X/\!\!/R}(b))$. In view of the uniqueness of closed orbits in the fibers,
this means that $\sigma_X(\mathcal O_b)=\mathcal O_{\sigma_{X/\!\!/R}(b)}$. Therefore, $\sigma_{X/\!\!/R}(b)=b$ if and only if $\sigma_X(\mathcal O_b)=\mathcal O_b$.
\end{proof}

Under the conditions of Lemma \ref{categ}, the algebra $k[{X}]^R$ of all $R$-invariant elements of the algebra $k[{X}]$ of regular functions on $X$ is finitely generated, ${X }/\!\!/R$ is the affine algebraic variety with the algebra of regular functions $k[{X}/\!\!/R]=k[{X}]^R$, and the comorphism correspond\-ing to morphism \eqref{Luna} is the identity embedding $k[{X}]^R\hookrightarrow k[{X}]$.
This implies that for any reductive closed subgroup $S$ of $G$ containing $R$, the identity embedding $k[X]^S\hookrightarrow k[X]^R$ determines  a dominant morphism
$X/\!\!/R\to X/\!\!/S$. This morphism is ${\rm Aut}(F_n)$-equivariant. Therefore,
the kernel of the action of the group ${\rm Aut}(F_n)$ on $X/\!\!/R$ lies in the kernel of its action
on  $X/\!\!/S$.

In the situation considered in Theorem \ref{mate}, this gives the following. By the assumption, in it, $R$ is a subgroup of some maximal torus $T$ of the group $G$. Therefore, it follows from what has been said that it suffices to prove Theorem \ref{mate} for
\begin{equation}\label{R=T}
R=T.
\end{equation}

In what follows, we assume that the group $G$ satisfies the conditions of Theorem \ref{mate}, i.e., is connected and semisimple. Note that the kernel of the action of the group $T$ on $X$ is $\mathscr{C}(G)$, since $\mathscr{C}(G)\subset T$ (see \cite[13.17,\,Cor. 2(d)]{3}).

\end{section}

\begin{section}{\bf Principal Luna stratum for action of   \boldmath $T$ on $X$}

The variety $X$ is smooth, and the group $T$ is reductive. Therefore, the diagonal action of the torus $T$ on $X$ by conjugation determines the Luna stratifications of the varieties $X$ and $X/\!\!/T$. In what follows, $X_{\rm pr}$ denotes the principal stratum of this stratification of the variety $X$.

\begin{theorem}\label{pLs}
    Let $G$ be a connected semisimple algebraic
group with a maximal torus $T$ acting diagonally by conjugation on the group variety $X$ of the group $G^n$.
\begin{enumerate}[\hskip 4.2mm\rm(a)]
\item The kernel of the specified action is $\mathscr{C}(G)$.
\item The following properties of a point $x\in X$ are equivalent:
\begin{enumerate}[\hskip 0mm\rm $({\rm b}_1)$]
\item $x\in X_{\rm pr}$;
\item the orbit $T\!\cdot\! x$ is closed in $X$, and $T_x=\mathscr{C}(G)$.
\end{enumerate}
\item Each fiber of the canonical morphism of
the Luna stratum $X_{\rm pr}$ is a $T$-orbit equivariantly isomorphic to $T/\mathscr{C}(G)$.
     \item ${\rm codim}_{X}(X\setminus X_{\rm pr})\geqslant n$.
     \end{enumerate}
\end{theorem}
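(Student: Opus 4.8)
The plan is to take the four assertions in the order (a), then a computation of the stabilizer in general position, then (b) and (c) together, and finally (d), which I expect to be the main work. For (a), an element $t\in T$ acts trivially on $X$ iff $tg_jt^{-1}=g_j$ for all $j$ and all $(g_1,\dots,g_n)$; letting one coordinate run over all of $G$ forces $t$ to centralize $G$, i.e. $t\in\mathscr C(G)$, and conversely $\mathscr C(G)\subseteq T$ centralizes $G$. Hence $\ker=\mathscr C(G)$, as already recorded in Section \ref{reduction}.

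The key preliminary is that the stabilizer in general position (s.g.p.) of the $T$-action equals $\mathscr C(G)$. Since every stabilizer contains $\ker=\mathscr C(G)$, it suffices to exhibit a dense open $\Omega\subseteq G$ with $Z_T(g):=\{t\in T:tgt^{-1}=g\}=\mathscr C(G)$ for $g\in\Omega$; then $\Omega\times G^{n-1}$ is dense open in $X$ with $T_x=\mathscr C(G)$. I would take $\Omega$ inside the big cell $U^-TU^+$: writing $g=u^-t_0u^+$ in its unique factorization, $tgt^{-1}=(tu^-t^{-1})t_0(tu^+t^{-1})$ is again a big-cell factorization, so $tgt^{-1}=g$ forces $tu^+t^{-1}=u^+$. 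Choosing $u^+$ with nonzero coordinate in every simple-root subgroup $U_{\alpha_i}$ (an open condition), the relation $tx_{\alpha_i}(c)t^{-1}=x_{\alpha_i}(\alpha_i(t)c)$ forces $\alpha_i(t)=1$ for every simple root, whence $t\in\mathscr C(G)$. Thus the s.g.p. is $\mathscr C(G)$, and by the standard correspondence between the principal stratum and the s.g.p. (see \cite{PV}), the closed orbit over a point of $(X/\!\!/T)_{\rm pr}$ has stabilizer $\mathscr C(G)$ and dimension $\dim T$.

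For (b) and (c) I would argue through the fibers of $\pi$. Over $b\in(X/\!\!/T)_{\rm pr}$ the fiber $\pi^{-1}(b)$ has dimension $\dim X-\dim(X/\!\!/T)=\dim T$ and contains the closed orbit $\mathcal O_b\cong T/\mathscr C(G)$, also of dimension $\dim T$. Every orbit in the fiber has dimension at most $\dim T=\dim\mathcal O_b$; a non-closed orbit $T\!\cdot\!x'$ there would contain $\mathcal O_b$ in the boundary of its closure, forcing $\dim\mathcal O_b<\dim(T\!\cdot\!x')\le\dim T$, a contradiction. Hence every orbit in $\pi^{-1}(b)$ is closed, and by uniqueness of the closed orbit $\pi^{-1}(b)=\mathcal O_b\cong T/\mathscr C(G)$; this is exactly (c), and it shows $X_{\rm pr}=\pi^{-1}((X/\!\!/T)_{\rm pr})$ consists of closed orbits with stabilizer $\mathscr C(G)$. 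For the converse inclusion in (b), the point is that $\mathscr C(G)=\ker$ acts trivially on all of $X$, hence trivially on the slice (normal space) at any point; so every closed orbit with stabilizer $\mathscr C(G)$ carries the same (trivial) slice representation and therefore lies in a single Luna stratum. Since $X_{\rm pr}$ is one Luna stratum contained in this set, the two coincide, which gives (b).

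The main obstacle is (d). I would write $X\setminus X_{\rm pr}$ as the union of the non-closed-orbit locus and the locus $\{T_x\supsetneq\mathscr C(G)\}$ and bound each. For the latter, $T_x\supsetneq\mathscr C(G)$ means some noncentral $t\in T$ centralizes all $g_j$, i.e. $x\in Z_G(t)^n$; as $Z_G(t)$ ranges over the finitely many pseudo-Levi subgroups and each proper one omits a pair $\pm\alpha$ of roots, $\operatorname{codim}_G Z_G(t)\ge2$, so this locus sits in a finite union of sets $Z_G(t)^n$ of codimension $\ge2n$. For the former, if $T\!\cdot\!x$ is not closed then by the Hilbert--Mumford--Kempf criterion there is a noncentral one-parameter subgroup $\lambda$ of $T$ with $\lim_{s\to0}\lambda(s)\!\cdot\!x$ existing; existence of this limit forces every $g_j\in P(\lambda)$, a proper parabolic containing $T$. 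Hence the non-closed locus lies in the finite union $\bigcup_P P^n$ over proper parabolics $P\supseteq T$, and $\operatorname{codim}_X P^n=n\dim(G/P)\ge n$. Combining the two bounds yields $\operatorname{codim}_X(X\setminus X_{\rm pr})\ge n$. The delicate steps are justifying that a destabilizing $\lambda$ may be chosen inside $T$ (so the limit criterion applies coordinatewise via $P(\lambda)$) and the finiteness of the parabolic and pseudo-Levi collections; the codimension arithmetic is then routine.
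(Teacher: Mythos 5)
Your proposal takes a genuinely different route from the paper's, and most of its ingredients are sound. The paper reduces everything to the case $n=1$: there it proves \emph{stability} of the $T$-action on $G$ (quoting Steinberg \cite[6.11]{27} for stability of the conjugation action of $G$ on itself, plus Luna \cite{Lu1} and \cite[Prop.\,6]{P89}) and computes the s.g.p.\ by the same big-cell calculation you use; both properties are then transferred to $G^n$ by the projection Lemma~\ref{lmmm} (if one coordinate $g_i$ of $x$ lies in $G_{\rm pr}$, then $T\!\cdot\!x$ is closed in $X$ and $T_x=\mathscr{C}(G)$), which simultaneously yields (d) via $X\setminus X_{\rm pr}\subseteq (G\setminus G_{\rm pr})^n$. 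You instead work directly on $G^n$: the stabilizer bound from the big cell (your simple-roots-only version is fine, since triviality on the simple roots forces triviality on the whole root lattice, and the common kernel of all roots is $\mathscr{C}(G)$), closedness from the Hilbert--Mumford/parabolic containment, and the codimension bound from parabolics ($\geqslant n$) and pseudo-Levis ($\geqslant 2n$); the finiteness claims hold because there are only finitely many closed subgroups of $G$ containing $T$. This is more self-contained than the paper in that it avoids Steinberg's stability theorem and Lemma~\ref{lmmm}.

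There is, however, one genuine flaw in the assembly. In (b)/(c) you invoke ``the standard correspondence between the principal stratum and the s.g.p.''\ to conclude that the closed orbit over a point of $(X/\!\!/T)_{\rm pr}$ has stabilizer $\mathscr{C}(G)$ and dimension $\dim T$ (and you also use $\dim(X/\!\!/T)=\dim X-\dim T$). These statements are \emph{false} for a general, non-stable action: for $\mathbb G_m$ acting on $k^2$ by scaling, the s.g.p.\ is trivial, yet the quotient is a point and the unique closed orbit over it is a fixed point with stabilizer $\mathbb G_m$. They require stability --- that generic $T$-orbits in $X$ are closed --- and nothing in your argument up to that point establishes it; this is exactly why the paper spends effort proving stability first. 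Your own text does contain a proof of stability: the Hilbert--Mumford/parabolic argument shows the non-closed-orbit locus lies in $\bigcup_P P^n$, a set of positive codimension. But in your plan that argument sits inside (d), and (d) as you set it up relies on (b) to decompose $X\setminus X_{\rm pr}$ into the two loci, so the write-up as it stands is circular. The fix is a reordering, and it works precisely because your two containments (non-closed locus inside $\bigcup_P P^n$, big-stabilizer locus inside $\bigcup_t Z_G(t)^n$) are proved with no reference to (b): establish them first, deduce stability, then run your fiber/slice argument for (b) and (c), and only then assemble (d).
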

\begin{proof}
Statement (a) is obvious.

(b) Denote by $\mathcal V$ the trivial ${\rm codim}_X(T)$-dimensional vector bundle over $T/\mathscr{C}(G)$. Note that $\dim(T/\mathscr{C}(G))=\dim(T)$ since the group $\mathscr{C}(G)$ is finite (see \cite[14.2.\,Cor.(a)]{3}).

$\cc$ It suffices for us to prove that
\begin{enumerate}[\hskip 4.2mm\rm(i)]
\item the action of the torus $T$ on $X$ under consideration is stable;
\item ${\mathscr C}(G)$ is its stabilizer in general position,
\end{enumerate}
or, in other words, that there is a nonempty open subset of $X$,
for all points $x$ of which property $({\rm b}_2)$ holds.
Indeed, suppose this
subset exists. Due to its openness, its intersection with the
open set $X_{\rm pr}$ is nonempty. Let $x$ be a point of this intersection.
Since $\mathscr{C}(G)$ is the kernel of the action of the group $ T$ on $X$, from  condition $({\rm b}_2)$
it follows that the normal bundle of the orbit $T\cdot x$ is equivariantly isomorphic to
$\mathcal V$. From this and from the definition of the Luna strata it follows that
a closed $T$-orbit from $X$ lies in $X_{\rm pr}$ if and only if its normal bundle is equivariantly isomorphic to $\mathcal V$. In particular, the dimension of this orbit is $\dim(T)$. It remains to note that the $T$-orbit of any point $y\in X_{\rm pr}$ is closed. Indeed, if this were not the case, then the unique  closed $T$-orbit in the fiber $\pi^{-1}_{X/\!\!/T}(\pi_{X/\!\!/T}^ {\ }(y))\subseteq X_{\rm pr}$ lying in its closure had
dimension strictly less than $\dim(T\!\cdot\!y)\leqslant \dim(T)$, which contradicts the $\dim(T)$-dimensionality of this closed orbit.

$\cc$ Let us now prove that properties (i) and (ii) indeed hold. It suffices to prove them for $n=1$. Indeed, suppose that for $n=1$ they are proved, i.e., there is a nonempty open subset of $G$ such that $T$-orbit of every its point $x$ is closed in $G$ and $T_x={\mathscr C}(G)$.
Then, as explained above, $G_{\rm pr}$ is the set of all such points $x$.
Let
$\pi_i\colon X=G^n\to G$ be the natural projection onto the $i$th factor. Applying  Lemma \ref{lmmm} to it, we infer that property $({\rm b}_2)$ holds for each point $x$ of a nonempty set $\pi_i^{-1}(G_ {\rm pr})$, which means that properties (i) and (ii) hold.

 $\cc$ It remains to prove that (i) and (ii) hold for $n=1$. In \cite[6.11]{27},
  it is proved that the action of $G$ on itself by conjugation is stable and its s.g.p. is $T$. From \cite[Thm. and Sect. 3]{Lu1} and the reductivity of $T$, it follows that the natural action of $T$ on $G/T$ is stable. These two facts imply, according to \cite[Prop.\,6]{P89}, that (i) holds for $n=1$.

$\cc$ Let $\Phi$ be the root system of the group $G$ with respect to the torus $T$ in which
subsystems of positive and negative roots with respect to some base in $\Phi$ are fixed.
For any $\alpha\in \Phi$, there is an embedding of algebraic groups
$\varepsilon_\alpha\colon \mathbb G_a\hookrightarrow G$,
such that
\begin{equation}\label{rosu}
t\varepsilon_\alpha(x)t^{-1}=\varepsilon_\alpha(\alpha(t)x)\quad\mbox{for all $t\in T, x\in \mathbb G_a$}
\end{equation}
(see \cite[26.3.\,Thm.]{Hu}, \cite[2.1]{27}). Consider in $G$ the ``big cell'' $\Theta$ (see \cite[28.5\,Prop.]{Hu}), i.e., the set of all elements of the form
\begin{equation}\label{Bcell}
\prod_{\alpha<0}\varepsilon_\alpha(x_\alpha)t \prod_{\alpha>0}\varepsilon_\alpha(x_\alpha),\quad x_\alpha\in \mathbb G_a, t\in T,
\end{equation}
where the factors in the products are taken with respect to some fixed orders on the sets of positive and negative roots.
The set $\Theta$ is open in $G$ and each of its elements can be uniquely written as \eqref{Bcell} (see\,\cite[14.5.\,Prop.(2), 14.14.\,Cor.]{3}, \cite[2.2, 2.3]{27}). In view of \eqref{rosu}, it is $T$-invariant. The set $\Theta^0$ of all elements of the form \eqref{Bcell} with $x_\alpha\neq 0$ for each $\alpha\in\Phi$ has the same properties. Let $a\in\Theta^0$ and $c\in T$. It follows from \eqref{rosu} and the indicated uniqueness that the condition $c\in T_a$ is equivalent to the condition
   \begin{equation}\label{cente}
   \mbox{$c\in \ker(\alpha)$ for all $\alpha\in \Phi$.}
\end{equation}
In turn, it follows from \eqref{rosu}, \eqref{Bcell} and the openness of $\Theta^0$ that \eqref{cente} is equivalent to the property that $c$ belongs to the kernel of the action of $T$ on $G$, i.e., \eqref{cente} is equivalent to the inclusion $c\in \mathscr{C}(G)$. This proves that $T_a=\mathscr{C}(G)$. Hence, (ii) holds for $n=1$. This completes the proof of (b).

(c) This follows from (b), since each fiber of the canonical morphism of any Luna stratum in $X$ contains a unique orbit closed in $X$.

(d) As is explained in the proof of statement (b), the set $X_{\rm pr}$ contains the set $\bigcup_{i=1}^n \pi_i^{-1}(G_{\rm pr})$,
from where we get
\begin{equation}\label{bounda}
X\setminus X_{\rm pr}\subseteq
(G\setminus G_{\rm pr})\times\cdots\times (G\setminus G_{\rm pr})\quad \mbox{($n$ factors).}
\end{equation}
From \eqref{bounda} it follows that $\dim (X\setminus X_{\rm pr})\leqslant n(\dim( G)-1)=\dim(X)-n$. This proves (d).
\end{proof}
\end{section}

\begin{section}{\bf Proofs of Theorems \ref{mate}--\ref{corcor}}\label{proofs}
\label{proofs}

\begin{proof}[Proof of Theorem {\rm \ref{mate}}]
As is explained in Section \ref{reduction}, we can (and shall) assume that equality \eqref{R=T} holds.
Arguing by contradiction, suppose that the kernel of homo\-mor\-phism \eqref{action3} contains an element $\sigma\!\in\!{\rm Aut}(F_n)$, $\sigma\neq e$. The cases $n=1$ and $n\geqslant 2$ will be considered separately: in each of them
the proof is based on the properties that do not hold
in the other.

\vskip 1mm

{\it Case $n=1$.}

The order of ${\rm Aut}(F_1)$ is $2$ and $\sigma(f_1)=f_1^{-1}$, so
\begin{equation}\label{-1}
\sigma_X(g)=g^{-1}\quad \mbox{for each $g\in G=X$.}
\end{equation}
For any element $t\in T$ we have $T\cdot t=t$. In view of  Lemma \ref{categ}, this implies that
$\sigma_X(t)=t$. Together with \eqref{-1} this shows that $t^2=e$ for any $t\in T$. This conclusion contradicts the fact that
the set of orders of elements of the torsion subgroup of any torus of positive dimension is not upper bounded (see\,\cite[8.9.\,Prop.]{3}).

\vskip 1mm

{\it Case $n\geqslant 2$.}

$\cc$ Since the kernel of the considered action
of the torus $T$ on $X$ is ${\mathscr C}(G)$ (see Theorem \ref{pLs}(a)), this action defines a faithful (that is, with trivial kernel) action on $X $ of the torus
\begin{equation}\label{TS}
S:=T/{\mathscr C}(G).
\end{equation}
The orbits of this action of the torus $S$, and hence the categorical quotient and the Luna stratifications are the same as those of the action of the torus $T$. Below, instead of the original action of the torus $T$, we consider the indicated action of the torus $S$.

$\cc$ Theorem \ref{pLs}(b) and Lemmas \ref{categ}, \ref{ot}$({\rm e}_1)$ imply
the existence of a set-theoretic mapping $\psi\colon (X/\!\!/T)_{\rm pr}\to S$ such that
\begin{equation}\label{sss}
\sigma_X^{\ }(x)=\psi(\pi_{X/\!\!/T}^{\ }(x))\cdot x\quad\mbox{for each point $x\in X_{ \rm pr}$.}
\end{equation}
Let us prove that the set-theoretic mapping
\begin{equation}\label{covpsi}
X_{\rm pr}\to S,\quad x\mapsto \psi(\pi_{X/\!\!/T}^{\ }(x))
\end{equation}
is a morphism of algebraic varieties. According to Theorem \ref{pLs}(b) and what was said in part\,(iii) of Section \ref{term}, the canonical morphism $X_{\rm pr}\to (X/\!\!/T)_{ \rm pr}$ is an \'etale trivial bundle with fiber $S$. Since algebraic tori are special groups in the sense of Serre
(see \cite[Prop.\,14]{S58}), this bundle is locally trivial in the Zariski topology.
Hence, $X_{\rm pr}$ is covered by $S$-invariant open sets for which there are $S$-equivariant isomorphisms of them with varieties of the form $U\times S$, where $U$\ is an open subset of $(X/\!\!/T)_{\rm pr}$, and the torus $S$ acts through translations of the second factor. If we identify them by these isomorphisms, then the restriction of mapping \eqref{covpsi} to any of these open sets has the form
\begin{equation*}
\alpha\colon
U\times S\to S, \quad (u, s)\mapsto \psi(u).
\end{equation*}
The issue therefore boils down to proving that
$\alpha$ is a morphism of algebraic varieties. To this end, note that since $\sigma_X^{\ }$ is a morphism, then
\begin{equation*}\label{usus}
U\times S\to U\times S,\quad (u, s)\mapsto (u, \psi(u)s)
\end{equation*}
is also a morphism in view of \eqref{sss}. Hence
\begin{equation*}
\beta
\colon
 U\times S\to S\times S,\quad (u, s)\mapsto (s, \psi(u)s)
\end{equation*}
is a morphism as well.
Moreover,
\begin{equation}
\gamma\colon S\times S\to S,\quad (s_1,s_2)\mapsto s^{-1}_1s_2.
\end{equation}
is a morphism too.
It remains to note that $\alpha=\gamma\circ\beta$.

$\cc$ Thus, there exists a rational mapping $$\theta\colon X \dashrightarrow S,$$ which is defined everywhere on the open set $X_{\rm pr}$ and coincides on it with morphism \eqref{covpsi}. Since $n\geqslant 2$, it follows from Theorem \ref{pLs}(d) that
\begin{equation}\label{cdm2}
{\rm codim}_X(X\setminus X_{\rm pr})\geqslant 2.
\end{equation}

The torus $S$ can be identified with the product of several copies of the group $k^\times$.
Then $\theta$ is given by a set of rational functions $\theta_i\colon X \dashrightarrow k$, which are compositions of the mapping $\theta$ with projections of this product onto the factors. Each $\theta_i$ is regular and does not vanish on $X_{\rm pr}$. Since $X$ is smooth, it follows from this and \eqref{cdm2} that
the divisor of $\theta_i$ on $X$ is zero, that is, $\theta_i$ is regular and does not vanish on the whole of $X$. Thus, we have a morphism $\theta_i: X\to k^\times$. Since $X$ is the group variety of the connected algebraic group $G^n$, it follows from this and from \cite[Thm.\,3]{R61} that $\theta_i$ is the product of a character of this group and a constant. But due to semisimplicity, $G^n$ has no nontrivial characters. Hence $\theta_i$ is a constant. This means that there is an element $s\in S$ for which
$\theta(X)=s$.

$\cc$ Fix an element $t\in T$ that maps to $s$ under the natural surjection $T\to S$ (see \eqref{TS}). We have proven that
$\sigma_X^{\ }(x)=t\cdot x$ for every point $x\in X_{\rm pr}$. Since $X_{\rm pr}$ is open in $X$, this means that
\begin{equation}\label{tst}
\sigma_X^{\ }(x)=t\cdot x\quad \mbox{for every point $x\in X$.}
\end{equation}

Since $\sigma\neq e$, it follows from \cite[Thm.\,2$({\rm b}_1)$]{22} that $\sigma_X\neq {\rm id}_X$. In view of \eqref{tst} and Theorem \ref{pLs}(a), this gives
\begin{equation}\label{notincen}
t\notin\mathscr{C}(G).
\end{equation}

It follows from \eqref{tst}, \eqref{gh}, and \eqref{mor} that for each $i\in\{1,\ldots, n\}$ the following group identity holds
\begin{equation}\label{identity}
\sigma(f_i)(g_1,\ldots, g_n)=tg_it^{-1}\quad\mbox{for any $g_1,\ldots, g_n\in G$}.
\end{equation}
In particular, for each $g\in G$ the equality obtained by substituting $g_1=\ldots=g_n=g$ into \eqref{identity} holds. Since $\sigma(f_i)$ is a noncommutative Laurent monomial in $f_1,\ldots f_n$, this means that there exists an integer $d$ such that the following group identity holds:
\begin{equation}\label{sgs1}
g^d=tgt^{-1}\quad\mbox{for each $g\in G$.}
\end{equation}

Notice that
\begin{equation}\label{neq}
d\neq 1\quad \mbox{and}\quad d\neq -1.
\end{equation}
Indeed, in view of \eqref{sgs1}, if $d=1$, then $t\in{\mathscr C}(G)$ contrary to \eqref{notincen}. If $d=-1$, then
for any $g, h\in G$ the following equality holds:
\begin{equation*}
h^{-1}g^{-1}=(gh)^{-1}\overset{\eqref{sgs1}}{=}t(gh)t^{-1}
=tgt^{-1}tht^{-1}\overset{\eqref{sgs1}}{=}g^{-1}h^{-1},
\end{equation*}
which means that the group $G$ is commutative and contradicts its semisimplicity.

Further, if $r$ is a positive integer, then the following group identity holds:
\begin{equation}\label{idenex}
t^rgt^{-r}=g^{d^r}\quad \mbox{for each $g\in G$.}
\end{equation}
Indeed, \eqref{idenex} becomes \eqref{sgs1} for $r=1$. Arguing by induction, from $t^{r-1}gt^{-r+1}=g^{d^{r-1}}$ we get
\begin{equation*}\label{sgs2}
t^rgt^{-r}{=}t(t^{r-1}gt^{-r+1})t^{-1}=
tg^{d^{r-1}}t^{-1}
\overset{\eqref{sgs1}}{=}(g^{d^{r-1}})^d=g^{d^r},
\end{equation*}
as stated.

Substituting $g=t$ into \eqref{sgs1} and taking into account \eqref{neq}, we conclude that
$t$ is an element of finite order. Let $r$ in \eqref{idenex} be equal to this order. Then \eqref{idenex} turns into the group identity
\begin{equation}\label{idenex1}
e=g^{d^r-1}\quad \mbox{for every $g\in G$.}
\end{equation}

In view of \eqref{neq}, we have $d^r-1\neq 0$. Hence
the group identity \eqref{idenex1} implies that
$G$, and therefore also $T$, is a torsion group the orders of whose elements are upper bounded. We have arrived at the same contradiction as when considering the case $n=1$. This completes the proof of Theorem \ref{mate}.
\end{proof}

\begin{proof}[Proof of Theorem {\rm \ref{mateappl}}]
Theorem \ref{mate} implies (a).
For $n\geqslant 3$, the group ${\rm Aut}(F_n)$ is nonlinear (see \cite{13}) and contains the group $B_n$ (see \cite[Sect. 3.7]{16}). In view of (a), this implies (b) and (c). Since the group $\mathscr {C}(F_n)$ is trivial for $n\geqslant 2$ (see \cite[Chap. I, Prop. 2.19]{15}), the group ${\rm Int}(F_n)$ is isomorphic to $F_n$ and hence is not amenable.
From this it follows\;(d).
\end{proof}

\begin{proof}[Proof of Theorem {\rm \ref{rationa}}]
Let us prove that the group variety of the group $G$ is birationally $T$-equivariantly isomorphic to some
$T$-module. To this end
consider the open set $\Theta$ in $G$ introduced in the proof of Theorem \ref{pLs} and fix the following objects:
\begin{enumerate}[\hskip 4.2mm $\cc$]
\item
one-dimensional $T$-module $L_\alpha$ for every $\alpha\in \Phi$ on which $T$ acts by the formula
\begin{equation}\label{La}
t\cdot \ell=\alpha(t)\ell,\quad t\in T, \ell\in L_\alpha,
\end{equation}

\item nonzero element $\ell_\alpha\in L_\alpha$,

\item a trivial $T$-module $F$ of dimension $\dim(T)$,

\item an open embedding of algebraic varieties
\begin{equation*}\label{embd}
\iota\colon T\hookrightarrow F
\end{equation*}
\end{enumerate}
(it exists because $T$ is a torus).

Consider the $T$-module
\begin{equation*}
V:=\textstyle\bigoplus_{\alpha>0}L_\alpha\oplus F\oplus \bigoplus_{\alpha<0}L_\alpha,
\end{equation*}
where the summands in the direct sums are taken with respect to some fixed orders on
the sets of positive and negative roots. In view of \eqref{rosu}, \eqref{La},
the mapping $\tau\colon \Theta\to V$,
sending each element \eqref{Bcell} to the vector
\begin{equation*}
\textstyle\bigoplus_{\alpha>0} x_\alpha \ell_\alpha\oplus \iota(t)\oplus\bigoplus_{\alpha<0}
x_\alpha\ell_\alpha,
\end{equation*}
is the searched for birational morphism (see \cite[14.4.\,Rem.]{3}).

Consequently,
\begin{equation*}
\tau^n\!:=\!\tau\times\cdots \times \tau\colon \Theta^n\!:=\!\Theta\times\cdots\times\Theta\to V^n\!: =\!V\oplus\cdots\oplus V\quad\mbox{($n$ components)}
\end{equation*}
is also a $T$-equivariant, and therefore, an $R$-equivariant birational mor\-phism.

Since $R$ and $V^n$ are respectively a diagonalizable group and an $R$-module, the field $k(V)^R$ is rational over $k$ (see \cite[Sect 2.9]{PV}). Since $\Theta^n$ is open in $X$, this implies that the field $k(X)^R$ is also rational over $k$. But the action of $T$ on $X$ is stable, and $\mathscr{C}(G)$ is the s.g.p. for it by Theorem \ref{pLs}(b). Since $R$ is reductive, from \cite[Thm. and Sect. 3]{Lu1} it follows that the natural action of $R$ on $T/\mathscr{C}(G)$ is stable. Hence, according to \cite[Prop.\,6]{P89}, the action of $R$ on $X$ is stable.
In view of \cite[Prop. 3.4]{PV}, this implies  that $k(X)^R$ is the field of fractions of the algebra $k[X]^R=k[X/\!\!/R]$. This is what the rationality of the variety
$X/\!\!/R$ means.
\end{proof}

\begin{proof}[Proof of Theorem {\rm \ref{corcor}}]
Let $G={\rm SL}_2$, so that $\dim(G)=3$ and $\dim(T)=1$. It follows from here and from
Theorem \ref{pLs}(c) that $\dim(X/\!\!/T)=3n-1$. Hence, in view of the rationality of the variety $X/\!\!/T$ (Theorem \ref{rationa}), the group ${\rm Aut}(X/\!\!/T)$ embeds into the Cremona group of rank $3n- 1$. The claim of the theorem now follows from Theorem \ref{mateappl} and the fact that every Cremona group embeds
into any Cremona group of a higher rank.
\end{proof}

\end{section}

\end{document}